\begin{document}
\sloppy
\newcommand{\dickebox}{{\vrule height5pt width5pt depth0pt}}
\newtheorem{thm}{Theorem}[section]
\newtheorem{defn}[thm]{Definition}
\newtheorem{lem}[thm]{Lemma}
\newtheorem{prop}[thm]{Proposition}
\newtheorem{cor}[thm]{Corollary}
\newtheorem{rem}[thm]{Remark}
\newtheorem{exa}[thm]{Example}

\newcommand{\lra}{\longrightarrow}
\newcommand{\xra}{\xrightarrow}
\newcommand{\ra}{\rightarrow}
\newcommand{\ol}{\overline}
\newcommand{\ul}{\underline}
\newcommand{\rsd}{{\rm resol.dim\,}}
\newcommand{\pd}{{\rm proj.dim\, }}
\newcommand{\id}{{\rm inj.dim\, }}
\newcommand{\gd}{{\rm gl.dim\, }}
\newcommand{\fd} {{\rm fin.dim\, }}
\newcommand{\repd} {{\rm rep.dim\, }}
\newcommand{\rad}{{\rm rad\,}}
\newcommand{\soc}{{\rm soc\,}}
\renewcommand{\top}{{\rm top\,}}
\newcommand{\ind}{{\rm ind\,}}
\newcommand{\add}{{\rm add\, }}
\newcommand{\Add}{{\rm Add\, }}
\newcommand{\Fac}{{\rm Fac\,}}
\newcommand{\Hom}{{\rm Hom \, }}
\newcommand{\End}{{\rm End\, }}
\newcommand{\rHom}{{\rm\bf R}{\rm Hom}\,}
\newcommand{\StHom}{{\rm \underline{Hom} \, }}
\newcommand{\Img}{{\rm Im}\,\,}
\newcommand{\Coker}{{\rm Coker}\,\,}
\newcommand{\Ker}{{\rm Ker}\,\,}
\newcommand{\Ext}{{\rm Ext}}
\newcommand{\Yext}{{\rm Yext}}
\newcommand{\E}{{\rm E}}
\newcommand{\dk}{{\rm dim_{_{K}}}}
\newcommand{\cpx}[1]{#1^{\bullet}}
\newcommand{\ccpx}[1]{#1_{\bullet}}
\newcommand{\D}[1]{{\mathscr D}(#1)}
\newcommand{\Dz}[1]{{\rm D}^+(#1)}
\newcommand{\Df}[1]{{\rm D}^-(#1)}
\newcommand{\Db}[1]{{\mathscr D}^b(#1)}
\newcommand{\C}[1]{{\mathscr C}(#1)}
\newcommand{\Cz}[1]{{\rm C}^+(#1)}
\newcommand{\Cf}[1]{{\rm C}^-(#1)}
\newcommand{\Cb}[1]{{\mathscr C}^b(#1)}
\newcommand{\K}[1]{{\mathscr K}(#1)}
\newcommand{\Kz}[1]{{\rm K}^+(#1)}
\newcommand{\Kf}[1]{{\mathscr K}^-(#1)}
\newcommand{\Kb}[1]{{\mathscr K}^b(#1)}
\newcommand{\Modc}{\ensuremath{\mbox{{\rm -Mod}}}}
\newcommand{\modc}{\ensuremath{\mbox{{\rm -mod}}}}
\newcommand{\stmodc}[1]{#1\mbox{{\rm -{\underline{mod}}}}}
\newcommand{\Proj}[1]{#1\mbox{{\rm -Proj}}}
\newcommand{\proj}[1]{#1\mbox{{\rm -proj}}}
\newcommand{\inj}[1]{#1\mbox{{\rm -inj}}}
\newcommand{\op}{^{\rm op}}
\newcommand{\otimesL}{\otimes^{\rm\bf L}}
\newcommand{\gm}{{\rm _{\Gamma_M}}}
\newcommand{\gmr}{{\rm _{\Gamma_M^R}}}
\def\vez{\varepsilon}\def\bz{\bigoplus}\def\sz{\oplus}

\newcommand{\scr}[1]{\mathscr #1}
\newcommand{\al}[1]{\mathcal #1}

{\Large \bf
\begin{center}  Transfer of derived equivalences from subalgebras to endomorphism algebras II
 \end{center}}
\medskip

\centerline{\sc Shengyong Pan$^*$  and Jiahui Yu}

\begin{center} School of Mathematics and Statistics,\\
 Beijing Jiaotong University,  Beijing 100044,\\
People's Republic of China\\
\end{center}

\renewcommand{\thefootnote}{\alph{footnote}}
\setcounter{footnote}{-1} \footnote{ $^*$ Corresponding author. Email: shypan@bjtu.edu.cn, 22121599@bjtu.edu.cn}

\renewcommand{\thefootnote}{\alph{footnote}}
\setcounter{footnote}{-1} \footnote{2000 Mathematics Subject
Classification: 18E30,16G10;16S10,18G15.}
\renewcommand{\thefootnote}{\alph{footnote}}
\setcounter{footnote}{-1} \footnote{Keywords: derived equivalence, subring, endomorphism algebra, Auslander-Yoneda algebra.}

\medskip
\begin{abstract} We investigate derived equivalences between subalgebras of some $\Phi$-Auslander-Yoneda algebras from a class of $n$-angles
in weakly $n$-angulated categories.
The derived equivalences are obtained by transferring subalgebras induced by $n$-angles to endomorphism algebras induced by approximation sequences. Then we extend
our constructions \cite{BP} to $n$-angle cases. Finally, we give an explicit example to illustrate our result.
\end{abstract}

\section{Introduction}

In the representation theory of algebras, derived equivalences have been
shown to preserve many homological invariants and provide new bridges between algebras and geometry. Rickard's Morita theory for
derived categories states that two rings are derived equivalent if and only if there is
a tilting complex over one ring with endomorphism ring isomorphic to the other ring \cite{Ri1, Ri2}.
However, in general, it is very difficult to describe the derived equivalence class of a given ring.
One idea is to study derived equivalent rings can be related by a sequence of such elementary derived equivalences. Many such kind of derived equivalent rings comes from mutations sequences of objects in categories, where approximations play a central role. This occurs in many aspects in algebra and geometry, such as mutations of tilting modules \cite{Happel1998b}, mutations of cluster tilting objects \cite{Buan2006}, mutations of silting objects \cite{Aihara2012}, mutations of exceptional sequences \cite{Rudakov1990}.

It is interesting to know whether the mutation sequences always give derived equivalent endomorphism rings.
In \cite{HX1}, Hu and Xi got
derived equivalences by $\mathcal {D}$-split sequences via tilting modules. Indeed, they used some special approximation sequences to get
derived equivalences. In \cite{HX2}, Hu and Xi constructed derived equivalences between $\Phi$-Auslander-Yoneda algebras
from given $\nu$-stable derived equivalences. Furthermore, Hu, Koenig and Xi \cite{HXK} proved that certain
triangles with symmetric approximations and some conditions give rise to derived equivalences of quotient algebras of endomorphism algebras of objects in the sequences modulo some particularly defined ideals.
After that Chen \cite{Chen2013} generalized their result to $n$-angles in $n$-angulated categories.
Chen and Hu \cite{CH} introduced
symmetric approximation sequences in additive categories and weakly $n$-angulated categories which include (higher) Auslander-Reiten sequences (triangles) and showed that such sequences always give rise to derived equivalences between the quotient rings of endomorphism rings of objects in the sequences modulo some ghost and coghost ideals. Recently, Pan \cite{Pan} proved that symmetric approximation sequences in $n$-exangulated categories give rise to derived equivalences between quotient algebras of locally $\Phi$-Beilinson-Green algebras in the principal diagonals modulo some factorizable ghost and coghost ideals by the locally finite tilting family.

A natural question arises, how about derived equivalences between subalgebras?

In \cite{Chen2014}, Chen constructed derived equivalences
between some subalgebras related to $\Phi$-Auslander-Yoneda algebras induced by exact sequences in abelian categories.
Thomas and Pan investigate derived equivalences between subalgebras of some $\Phi$-Auslander-Yoneda algebras from a class of triangles
in triangulated categories. The derived equivalences are obtained by transferring subalgebras induced by triangles to endomorphism algebras induced by approximation sequences

We extend in this paper Thomas-Pan's main result \cite[Theorem 1.1]{BP} to a more general case and construct
derived equivalences between subalgebras of $\Phi$-Auslander-Yoneda algebras induced from $n$-angles in a weakly $n$-angulated category $(\al T,\Sigma)$ ($n\geq 2$). In order to state our result precisely,
we first fix some notation. Let $\Phi$ be an admissible set of $\mathbb{N}$ and $X$ an object in $\al T$. The $\Phi$-Auslander-Yoneda algebra
of $X$ is $\E_{\al T}^\Phi(X):=\bigoplus_{i\in \Phi} \Hom_{\mathcal T}(X, X[i])$.  The matrix rings
$\begin{pmatrix}
\widetilde{\E_{\al T}^\Phi(X)}&     \widetilde{\E_{\al T}^\Phi(X,M)}  \\
\E_{\al T}^\Phi(M,X)    &\E_{\al T}^\Phi(M)
\end{pmatrix}$
and
$\begin{pmatrix}
\widetilde{\E_{\al T}^\Phi(Y)}&     \E_{\al T}^\Phi(Y,M)\\
\widetilde{\E_{\al T}^\Phi(M,Y)}    &\E_{\al T}^\Phi(M)
\end{pmatrix}$
are some subrings of $\E_{\al T}^\Phi(X\oplus M)$ and $\E_{\al T}^\Phi(Y\oplus M)$, respectively. For definitions of these rings, we refer the reader to Section 2.

Our main result is as follows:

\begin{thm}\label{1.1}
Let $(\al T,\Sigma)$ be a weakly $n$-angulated category ($n\geq 2$),
and let $M$ be an object in $\mathcal {T}$. Let
 $$
 X\xra f M_1\xra{f_1} M_2\ra\cdots\xra{f_{n-1}} M_{n-2}\xra g Y\xra h \Sigma X
 $$
be an $n$--angle in $\mathcal {T}$ with $M_j\in\add(M)$ for all $j=1,\cdots,n-2$, assume that $\Hom_{\al T}(M,\Sigma^i(X))=0=\Hom_{\al T}(Y,\Sigma^i(M))$ for $0\neq i\in\Phi$,
and $\Hom_{\al T}(\Sigma(X),Y)=\Hom_{\al T}(\Sigma(X),M)=\Hom_{\al T}(\Sigma(M),Y)=0$.
Then the rings
$\begin{pmatrix}
\widetilde{\E_{\al T}^\Phi(X)}&     \widetilde{\E_{\al T}^\Phi(X,M)}  \\
\E_{\al T}^\Phi(M,X)    &\E_{\al T}^\Phi(M)
\end{pmatrix}$
and
$\begin{pmatrix}
\widetilde{\E_{\al T}^\Phi(Y)}&     \E_{\al T}^\Phi(Y,M)\\
\widetilde{\E_{\al T}^\Phi(M,Y)}    &\E_{\al T}^\Phi(M)
\end{pmatrix}$
are derived equivalent.
\end{thm}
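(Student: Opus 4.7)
The plan is to apply Rickard's Morita theorem for derived categories. Denote the first matrix ring by $A$ and the second by $B$; I aim to exhibit a tilting complex $\cpx T$ in $\Kb{\proj A}$ whose endomorphism ring is isomorphic to $B$. This follows the template of Thomas--Pan \cite{BP} in the triangle case and of Chen \cite{Chen2013} in the $n$-angle case, the novelty being that the subring (tilde) decoration must be reproduced correctly on both sides of the equivalence.

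For the construction, let $P_X$ and $P_M$ denote the indecomposable projective $A$-modules corresponding to the two columns of the matrix presentation of $A$. Applying the Yoneda-type functor $\Hom_{\al T}(X\oplus M,-)$ to the given $n$-angle yields a bounded complex of projective $A$-modules
\[
\cpx Q \; : \; 0 \lra P_X \lra P_{M_1} \lra P_{M_2} \lra \cdots \lra P_{M_{n-2}} \lra 0,
\]
with every $P_{M_j}\in\add(P_M)$. Set $\cpx T := \cpx Q\oplus P_M$. To verify that $\cpx T$ is a tilting complex, one checks (i) that $\add\cpx T$ generates $\Kb{\proj A}$, which holds because $P_M$ is already a direct summand and $\cpx Q$ together with $\add(P_M)$ recovers $P_X$ up to homotopy; and (ii) that $\Hom_{\Kb{\proj A}}(\cpx T,\cpx T[i])=0$ for $i\neq 0$. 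Assertion (ii) is reduced block-by-block to the vanishings $\Hom_{\al T}(M,\Sigma^i X)=\Hom_{\al T}(Y,\Sigma^i M)=0$ for $0\neq i\in\Phi$ by applying $\Hom_{\al T}(-,-)$ to the $n$-angle in each slot; the boundary contributions are then killed by the single-degree hypotheses $\Hom_{\al T}(\Sigma X,Y)=\Hom_{\al T}(\Sigma X,M)=\Hom_{\al T}(\Sigma M,Y)=0$.

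The main obstacle is the identification $\End_{\Kb{\proj A}}(\cpx T)\cong B$ with the tildes placed in the correct positions. Closing the $n$-angle from the other side shows that $\cpx Q$ represents $Y$ (up to an appropriate shift) in the derived category, so the four blocks of $\End_{\Kb{\proj A}}(\cpx T)$ naturally land inside the four blocks of the $\Phi$-Auslander-Yoneda algebra $\E_{\al T}^\Phi(Y\oplus M)$. The tilde subspaces arise precisely because chain-map representatives of morphisms involving $\cpx Q$ are forced to factor through the chain of intermediate $M_j$'s, and this is exactly the factorization property that defines the tilde: the three single-degree vanishings on $\Sigma X$ and $\Sigma M$ are used at the critical cochain degrees to rule out spurious homotopy contributions, while the $\Phi$-indexed vanishings perform the analogous role in the higher degrees indexed by $\Phi$. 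Blocks of $\End(\cpx T)$ whose source (or target) is the stalk $P_M$ impose no chain-level constraint, and so match the undecorated entries of $B$. Once this entry-by-entry matching of the four blocks is carried out, Rickard's theorem delivers the desired derived equivalence $\Db A\simeq \Db B$.
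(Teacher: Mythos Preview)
Your overall template is right, and the tilting complex you aim for is the same one the paper builds. However, there is a genuine gap at the very first step. You claim that applying $\Hom_{\al T}(X\oplus M,-)$ to the $n$-angle produces a complex of projective $A$-modules. It does not: $A$ is only a \emph{subring} of $\E_{\al T}^\Phi(X\oplus M)$, so the Yoneda functor from $\al T$ lands in projectives over the full $\Phi$-Auslander--Yoneda algebra, not over $A$. Concretely, the column $P_M=Ae_M$ has top entry $\widetilde{\E^\Phi(X,M)}$, whereas $\E^\Phi(X\oplus M,M)$ has top entry $\E^\Phi(X,M)$; these differ whenever some $X\to\Sigma^iM$ fails to factor through $f$, which is the generic situation. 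So your $\cpx Q$, as described, is not a complex in $\Kb{\proj A}$, and none of the subsequent verifications (self-orthogonality, the endomorphism computation) can be carried out in the category you have written down.

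The paper's contribution is precisely to repair this. It introduces an auxiliary additive category $\mathcal S$ (obtained by pushing $\proj\Lambda$ through $\Gamma\otimes_\Lambda-$ and the equivalence $\proj\Gamma\simeq\widehat{\add W}$) in which $A$ becomes an \emph{honest} endomorphism ring, $A\simeq\End_{\mathcal S}(X\oplus M)$, and the tilde subgroups are identified with genuine Hom-sets in $\mathcal S$. Only then does the Yoneda functor $\Hom_{\mathcal S}(U,-):\add U\to\proj A$ become fully faithful, and your complex $\cpx Q$ acquires the correct meaning. The vanishing hypotheses are then used (Lemma~\ref{3.4}) to show that $f$ and $g$ are left/right $\add M$-approximations \emph{in $\mathcal S$} and that the two Hom-sequences $(\star)$, $(\star\star)$ are exact; these are what drive self-orthogonality via \cite{Hoshino2003}. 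Finally, the identification $\End_{\Kb{\proj A}}(\cpx T)\simeq B$ is not a one-line ``$\cpx Q$ represents $Y$'' argument: Lemma~\ref{2.5} constructs the isomorphism $\Theta$ explicitly by filling in morphisms along the $n$-angle and uses each of the three single-degree vanishings $\Hom(\Sigma X,Y)=\Hom(\Sigma X,M)=\Hom(\Sigma M,Y)=0$ and the $\Phi$-indexed vanishings at separate, identifiable points to show $\Theta$ is well defined, injective, and surjective. Your sketch does not supply any substitute for the transfer to $\mathcal S$, and without it the argument does not get off the ground.
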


\begin{rem}
(1) By transferring subalgebras induced by $n$-angles to endomorphism algebras induced by approximation sequences,
 we give a simple approach to prove our result. For more details, we refer to Lemma \ref{3.4}.
 We also give an alternative proof to calculate the endomorphism rings of tilting complexes, we refer to  Lemma \ref{2.5}.
 Although the method is similar to that in \cite[Theorem 1.1]{Pan}, but here we need to check more details since the conditions in this theorem is more complicated.

 (2) We focus on derived equivalences between subalgebras which is different from derived equivalences between quotient algebras by Chen \cite{Chen2013}.

(3) If $n=3$, then we re-obtain the main result  \cite[Theorem 1.1]{BP}.
\end{rem}

This paper is organized as follows. In Section 2, we review some
basic facts on derived equivalences and $\Phi$-Auslander-Yoneda algebras and give
some notions. Then we prove Theorem \ref{1.1}. Finally, we give an explicit example to illustrate the main theorem.

\section{Proof of Theorem 1.1}
In this section, we shall review basic definitions and facts which will be useful in the proofs later on.
In particular,  we recall the definition of
$\Phi$-Auslander-Yoneda algebras, where $\Phi$ is an
admissible set of $\mathbb{N}$, and we construct derived
equivalences between subalgebras of some $\Phi$-Auslander-Yoneda algebras from given $n$-angles in a weakly $n$-angulated category ($n\geq 2$).

\subsection{Conventions}
We begin by briefly recalling some definitions and notations on
derived categories and derived equivalences.

Let $\mathscr{A}$ be an additive category. For two morphisms $\alpha:
X\ra Y$ and $\beta: Y\ra Z$, their composition is denoted by
$\alpha\beta$, where $X$ and $Y$ are in $\mathscr{A}$. A
complex $\cpx{X}=(X^i,d_{X}^i)$ over $\mathscr{A}$ is a sequence of
objects $X^i$ and morphisms $d_{X}^i$ in $\mathscr{A}$ of the form:
$$\cdots \ra X^i\stackrel{d^i}\ra X^{i+1}\stackrel{d^{i+1}}\ra
X^{i+1}\ra\cdots,$$ such that $d^id^{i+1}=0$ for all
$i\in\mathbb{Z}$. If $\cpx{X}=(X^i,d_{X}^i)$ and
$\cpx{Y}=(Y^i,d_{Y}^i)$ are two complexes, then a morphism $\cpx{f}:
\cpx{X}\ra\cpx{Y}$ is a sequence of morphisms $f^i: X^i\ra Y^i$ of
$\mathscr{A}$ such that $d^i_{X}f^{i+1}=f^id^i_{Y}$ for all
$i\in\mathbb{Z}$. The morphism $\cpx{f}$ is called a chain map between
$\cpx{X}$ and $\cpx{Y}$. The category of
complexes over $\mathscr{A}$ with chain maps is denoted by
$\C{\mathscr{A}}$.
The homotopy category of complexes over $\mathscr{A}$ is denoted by
$\K{\mathscr{A}}$. If $\mathscr{A}$ is an abelian category, then we denote by $\D{\mathscr{A}}$
the derived category of complexes over $\mathscr{A}$. It is
well known that, for an abelian category $\mathscr{A}$, the
categories $\K{\mathscr{A}}$ and $\D{\mathscr{A}}$ are triangulated
categories. For basic results on triangulated categories, we refer
the reader to \cite{N}.

Let $R$ be a commutative artinian ring and let $A$ be an Artin $R$-algebra.
Denote by $A$-Mod and $A$-mod the category of left $A$-modules and finitely generated left
$A$-modules, respectively. The full subcategory of $A$-Mod and $A$-mod consisting of
projective modules is denoted by $\Proj{A}$ and $\proj{A}$, respectively.
 Let $\Kb{A\Modc}$ denote the
homotopy category of bounded complexes of $A$-modules and let
$\Db{A\Modc}$ denote the bounded derived category of A\Modc, respectively.

The following theorem is a key ingredient of Morita theory on derived equivalences for module categories of
rings or algebras which was established by Rickard \cite{Ri1}.

\begin{thm} \label{R} $\rm \cite[Theorem \;6.4]{Ri1}$
Let $A$ and $B$ be rings. The following conditions are equivalent.

$(i)$ $\Db{A\Modc}$ and $\Db{B\Modc}$ are equivalent
as triangulated categories.

$(ii)$ $\Kf{\Proj{A}}$ and $\Kf{\Proj{B}}$ are equivalent as
triangulated categories.

$(iii)$ $\Kb{\Proj{A}}$ and $\Kb{\Proj{B}}$ are equivalent as
triangulated categories.

$(iv)$ $\Kb{\proj{A}}$ and $\Kb{\proj{B}}$ are equivalent as
triangulated categories.

$(v)$ $B$ is isomorphic to $\End_{\Db{A\Modc}}(\cpx{T})$ for some complex
$\cpx{T}$ in $\Kb{\proj{A}}$ satisfying

         \qquad $(a)$ $\Hom_{\Db{A\Modc}}(\cpx{T},\cpx{T}[n])=0$
         for all $n\neq 0$.

         \qquad $(b)$ $\add(\cpx{T})$, the category of direct summands of
          finite direct sums of copies of $\cpx{T}$, generates
          $\Kb{\proj{A}}$ as a triangulated category.
\end{thm}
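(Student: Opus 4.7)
The plan is to establish Rickard's theorem by proving the cycle of implications $(v) \Rightarrow (iv) \Rightarrow (iii) \Rightarrow (ii) \Rightarrow (i) \Rightarrow (v)$. The implications among $(ii)$, $(iii)$, $(iv)$ and their relation to $(i)$ are essentially formal once the correct subcategories are identified, while the two substantive implications are $(i) \Rightarrow (v)$ and $(v) \Rightarrow (iv)$; the latter is the historical content of Rickard's Morita theory for derived categories and will be the main obstacle.

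For the routine chain $(iv) \Rightarrow (iii) \Rightarrow (ii)$, I would exploit the natural fully faithful inclusions $\Kb{\proj{A}} \hookrightarrow \Kb{\Proj{A}} \hookrightarrow \Kf{\Proj{A}}$, noting that $\Kb{\proj{A}}$ is characterized as the full subcategory of compact (equivalently, perfect) objects inside the larger ones; since triangle equivalences preserve compactness and boundedness of cohomology, an equivalence at any level restricts to the smaller ones. The implication $(ii) \Rightarrow (i)$ then uses the classical identification, via projective resolutions, of $\Db{A\Modc}$ with the full subcategory of $\Kf{\Proj{A}}$ consisting of complexes with bounded cohomology; this subcategory is itself preserved under triangle equivalences of $\Kf{\Proj{-}}$.

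For $(i) \Rightarrow (v)$, suppose $F \colon \Db{B\Modc} \to \Db{A\Modc}$ is a triangle equivalence and set $\cpx T := F(B)$. Since the regular module $B$ is compact in $\Db{B\Modc}$ (it is a perfect complex concentrated in degree zero) and generates $\Kb{\proj{B}}$ as a triangulated category with summands, $\cpx T$ is compact in $\Db{A\Modc}$, hence lies in $\Kb{\proj{A}}$, and its additive closure generates $\Kb{\proj{A}}$ as a triangulated category, which gives condition $(b)$. Condition $(a)$ is immediate from $\Hom_{\Db{A\Modc}}(\cpx T, \cpx T[n]) \cong \Hom_{\Db{B\Modc}}(B, B[n]) = 0$ for $n \neq 0$, and $\End_{\Db{A\Modc}}(\cpx T) \cong \End_{\Db{B\Modc}}(B) \cong B$.

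The heart of the matter is $(v) \Rightarrow (iv)$. The goal is to construct a triangle equivalence $G \colon \Kb{\proj{B}} \to \Kb{\proj{A}}$ that sends $B$ to $\cpx T$. I would begin with the additive equivalence $\add(B) \simeq \add(\cpx T)$ determined by $B \mapsto \cpx T$, and extend it to bounded complexes over $\add(B)$ by totalizing bicomplexes: each term $B^{n_i}$ is replaced by the corresponding object of $\add(\cpx T)$, and differentials are lifted using the vanishing supplied by $(a)$. The main obstacle is that different choices of lifts of differentials differ by homotopies, so condition $(a)$ must be invoked carefully, together with an induction on the length of the complex, to show that the construction is well-defined up to isomorphism in $\Kb{\proj{A}}$ and yields a triangle functor. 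Full faithfulness would follow from the same induction, using $(a)$ to reduce $\Hom$-computations in $\Kb{\proj{A}}$ between complexes over $\add(\cpx T)$ to the corresponding computations in $\Kb{\proj{B}}$; essential surjectivity would follow from $(b)$, since every object of $\Kb{\proj{A}}$ is obtained from $\cpx T$ by iterated cones, shifts, and summands. A cleaner modern route is to pass to a DG enhancement, form the DG algebra of endomorphisms of $\cpx T$, observe that $(a)$ makes this DG algebra formal with zeroth cohomology $B$, and deduce the equivalence from DG-Morita theory for tilting DG modules.
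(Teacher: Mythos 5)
The paper does not prove this statement at all: it is quoted verbatim from Rickard's Morita theory (\cite[Theorem 6.4]{Ri1}) and used as a black box, so there is no internal proof to compare yours against. On its own terms, your outline of the two substantive implications is the standard one — for $(v)\Rightarrow(iv)$, extending the additive equivalence $\add(B)\simeq\add(\cpx{T})$ to bounded complexes by induction on length using condition $(a)$, or equivalently the DG/tilting-module formalism — and the sketch of $(i)\Rightarrow(v)$ identifies the right ingredients.

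There is, however, a directional flaw in your ``routine chain.'' Restriction to compact (perfect) objects, or to cohomologically bounded objects, takes an equivalence of a \emph{larger} category to one of a \emph{smaller} full subcategory; the argument you describe therefore proves $(ii)\Rightarrow(iii)\Rightarrow(iv)$, not $(iv)\Rightarrow(iii)\Rightarrow(ii)$. Passing upward from $\Kb{\proj{A}}\simeq\Kb{\proj{B}}$ to $\Kb{\Proj{A}}$ or $\Kf{\Proj{A}}$ is not formal: these larger categories are not recovered from the smaller one by any intrinsic triangulated closure available here (for instance $\Kb{\Proj{A}}$ is not closed under arbitrary coproducts), and in Rickard's proof the upward implications are obtained by rebuilding the functor from the tilting complex at each level. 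The clean fix is to reorganize the cycle so that $(v)$ yields the equivalence for the largest category directly and the remaining conditions follow by restriction. Two smaller gaps: compactness arguments require a category with arbitrary coproducts, so in $\Db{A\Modc}$ you must instead use Rickard's intrinsic characterization of perfect complexes via eventual $\Hom$-vanishing; and the claim that ``the subcategory of complexes with bounded cohomology is preserved by equivalences of $\Kf{\Proj{A}}$'' likewise needs an intrinsic $\Hom$-theoretic characterization of that subcategory, which Rickard supplies but you do not.
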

\noindent{\bf Remarks.} (1) The rings $A$ and $B$ are said to be derived equivalent if $A$
and $B$ satisfy the conditions of the above theorem.

(2) The complex $\cpx{T}\in \Kb{\proj{A}}$
in Theorem \ref{R} (v) which satisfies the conditions (a) and (b) is
called a {\em tilting complex} for $A$.

\subsection{$\Phi$-Auslander-Yoneda algebras}
Let $\mathbb Z$ be the
set of all integers, and let $\mathbb N$ be the set of natural numbers.
 Recall from \cite{HX2} that an
admissible subset $\Phi$ of $\mathbb Z$ is defined as a subset of $\mathbb Z$  containing $0$ and the following condition is
satisfied:

If $i,j,k\in \Phi$ satisfy that $i+j+k\in \Phi$, then $i+j\in \Phi$ if and
only if $j+k\in \Phi$.

Recall that the $\Phi$-Auslander-Yoneda algebras defined in \cite{HX2} as follows. Let $R$ be a commutative ring, and $\Phi$ an admissible subset of $\mathbb Z$.
For a triangulated $R$-category $\al T$ with shift functor [1]
and a triangulated functor $F$ from $\al T$ to $\al T$. The $\Phi$-orbit category
${\mathcal T}^{\Phi}$ of $\mathcal T$ is a category in which the
objects are the same as that of $\mathcal T$, and the morphism set
between two objects $X$ and $Y$ is defined to be

$$ \Hom_{{\mathcal T}^{\Phi}}(X,Y):=\displaystyle \bigoplus_{i\in \Phi} \Hom_{\mathcal T}(X, Y[i])\in R\Modc,$$
and the composition is defined in the obvious way. Since $\Phi$ is
admissible, ${\mathcal T}^{\Phi}$ is an additive $R$-category. In
particular, $\Hom_{{\mathcal T}^{\Phi}}(X, X)$ is an $R$-algebra
(which may not be  artinian), and $\Hom_{{\mathcal
T}^{\Phi}}(X, Y)$ is an $\Hom_{{\mathcal T}^{\Phi}}(X,
X)$-$\Hom_{{\mathcal T}^{\Phi}}(Y, Y)$-bimodule. The algebra $\Hom_{{\mathcal T}^{\Phi}}(X, X)$ is called the $\Phi$-Auslander-Yoneda algebra
of $X$.

\subsection{Proof of the main result}
Let $\mathcal {D}$ be a full
subcategory of $\al T$. Suppose $X$ is an object of $\al T$. A
morphism $f:X\ra D$ is called a left $\mathcal {D}$-approximation of $X$ if $D\in\mathcal {D}$, and
for any morphism $f': X\ra D'$ with $D'\in\mathcal {D}$, there is a morphism $f'': D\ra D$
such that $f'=ff''$. Similarly, we have the notion of a right $\mathcal {D}$-approximation of $X$.

Let $M\in\al T$. Assume that there is an $n$-angle
$X\xra f M_1\xra{f_1} M_2\ra\cdots\xra{f_{n-3}} M_{n-2}\xra g Y\xra h \Sigma X$ in $\al T$.
Then we have two $n$-angles
\begin{equation}
 M\oplus X\xra{\ol f} M\oplus M_1\xra{\ol f_1} M_2\ra\cdots\xra{f_{n-3}} M_{n-2}\xra g Y\xra h \Sigma X\\
\end{equation}
\begin{equation}
 X\xra{f}  M_1\xra{f_1} M_2\ra\cdots\xra{\ol f_{n-3}} M\oplus M_{n-2}\xra{\ol g} M\oplus Y\xra{h} \Sigma X
\end{equation}
in $\al T$, where $\ol{f}=\begin{pmatrix}
1 &0 \\
0 &f
\end{pmatrix}: M\oplus M_n\ra M\oplus Y$, $\ol{f_1}=\begin{pmatrix}
0  \\
f_1
\end{pmatrix}:M\oplus M_1\ra  M_2$, $\ol{f_{n-1}}=(0,f_{n-3}):M_{n-3}\ra M\oplus M_{n-2}$ and $\ol{g}=\begin{pmatrix}
1 &0 \\
0 &g
\end{pmatrix}: M\oplus M_{n-2}\ra  M\oplus Y$.

Let $W:=M\oplus X \oplus\bigoplus^n_{i=1} M_i\oplus Y$. We thus have a $\Phi$-Auslander-Yoneda algebra of $W$ as follows:
$$\Gamma=
\begin{pmatrix}
 \E^{\Phi}(X,X) &  (X,M)  &  \E^{\Phi}(X,M_1) & \cdots & \E^{\Phi}(X,M_n)& \E^{\Phi}(X,Y)\\
\E^{\Phi}(M,X)      & \E^{\Phi}(M) & \E^{\Phi}(M,M_1) & \cdots & \E^{\Phi}(M,M_n)&\E^{\Phi}(M,Y)\\
\E^{\Phi}(M_1,X)& \E^{\Phi}(M_1,M)    & \E^{\Phi}(M_1,M_1)  & \cdots  & \E^{\Phi}(M_1,M_n)& \E^{\Phi}(M_1,Y)\\
  \vdots & \vdots & \vdots & \vdots& \vdots\\
  \E^{\Phi}(M_{n-1},X) &\E^{\Phi}(M_{n-1},M)    & \E^{\Phi}(M_{n-1},M_1)&\cdots  & \E^{\Phi}(M_{n-1},M_n)& \E^{\Phi}(M_{n-1},Y)\\
  \E^{\Phi}(M_n,X) &\E^{\Phi}(M_n,M) &\E^{\Phi}(M_n,M_1) &\cdots &\End(M_n)&\E^{\Phi}(M_n,Y)\\
\E^{\Phi}(Y,X)      &\E^{\Phi}(Y,M) & \E^{\Phi}(Y,M_1) & \cdots & \E^{\Phi}(Y,M_n)&\End(Y)\\
\end{pmatrix}.
$$
Now we take the subalgebra $\Lambda$ of $\Gamma$ as follows:
$$\Lambda
=
\begin{pmatrix}
\widetilde{\E^{\Phi}(X)}  &  \widetilde{\E^{\Phi}(X,M)}     &  \widetilde{\E^{\Phi}(X,M_1)} & \cdots & \widetilde{\E^{\Phi}(X,M_n)}& \widetilde{\E^{\Phi}(X,Y)}\\
 \E^{\Phi}(M,X)      &\E^{\Phi}(M) & \E^{\Phi}(M,M_1) & \cdots & \E^{\Phi}(M,M_n)&\widetilde{\E^{\Phi}(M,Y)}\\
   \E^{\Phi}(M_1,X)& \E^{\Phi}(M_1,M)    & \E^{\Phi}(M_{n-2},M_1)  & \cdots  & \E^{\Phi}(M_1,M_n)& \widetilde{\E^{\Phi}(M_1,Y)}\\
  \vdots & \vdots & \vdots & \vdots& \vdots\\
  \E^{\Phi}(M_{n-1},X) &\E^{\Phi}(M_{n-1},M)    & \E^{\Phi}(M_{n-1},M_1)&\cdots  & \E^{\Phi}(M_{n-1},M_n)& \widetilde{\E^{\Phi}(M_{n-1},Y)}\\
  \E^{\Phi}(M_n,X) &\E^{\Phi}(M_n,M) &\E^{\Phi}(M_n,M_1) &\cdots &\E^{\Phi}(M_n)&\widetilde{\E^{\Phi}(M_n,Y)}\\
\E^{\Phi}(Y,X)      &\E^{\Phi}(Y,M) & \E^{\Phi}(Y,M_1) & \cdots & \E^{\Phi}(Y,M_n)&\widetilde{\E^{\Phi}(Y)}\\
\end{pmatrix},
$$
where
$$
\begin{array}{rl}
\widetilde{\End(X)}=\{t\in\End(X)\mid tf=ft'\;\text{for some}\; t'\in\End_{\al C}(M_1) \},\\
\widetilde{\Hom(X,M)}=\{t\in\Hom(X,M)\mid t \;\text{factors through}\; f \;\text{in}\; \mathcal {C}\},\\
\widetilde{\Hom(X,M_i)}=\{t\in\Hom(X,M_i)\mid t \;\text{factors through}\; f \;\text{in}\; \mathcal {C}, i=1,\cdots,n\},\\
\widetilde{\Hom(X,Y)}=\{t\in\Hom(X,Y)\mid t \;\text{factors through}\; f\; \text{and}\; g\;\text{in}\; \mathcal {C}\},\\
\widetilde{\Hom(M,Y)}=\{t\in \Hom(M,Y)\mid t \;\text{factors through}\; g \;\text{in}\; \mathcal {C} \},\\
\widetilde{\Hom(M_i,Y)}=\{t_i\in \Hom(M,Y)\mid t_i \;\text{factors through}\; g \;\text{in}\; \mathcal {C},i=1,\cdots,n \},\\
\widetilde{\End(Y)}=\{t\in \End(Y)\mid gt=t'g\;\text{for some} \; t'\in\End_{\al C}(M_n) \},\\
\widetilde{\E^{\Phi}(X)}=\{t=(t_i)_{i\in\Phi}\in\E^{\Phi}(X)\mid t_i\Sigma^i(f) \;\text{factors through}\; f \;\text{in}\; \mathcal {T}\;\text{for}\; i\in\Phi \},\\
\widetilde{\E^{\Phi}(X,M)}=\{t=(t_i)_{i\in\Phi}\in\E^{\Phi}(X,M)\mid t_i \;\text{factors through}\; f \;\text{in}\; \mathcal {T}\;\text{for}\; i\in\Phi \},\\
\widetilde{\E^{\Phi}(X,Y)}=\{t=(t_i)_{i\in\Phi}\in\E^{\Phi}(X,Y)\mid t_i \;\text{factors through}\; f\; \text{and}\; \Sigma^i(g) \;\text{in}\; \mathcal {T}\;\text{for}\; i\in\Phi \},\\
\widetilde{\E^{\Phi}(M,Y)}=\{t=(t_i)_{i\in\Phi}\in \E^{\Phi}(M,Y)\mid t_i \;\text{factors through}\; \Sigma^i(g) \;\text{in}\; \mathcal {T}\;\text{for}\; i\in\Phi \},\\
\widetilde{\E^{\Phi}(Y)}=\{t=(t_i)_{i\in\Phi}\in \E^{\Phi}(Y)\mid g t_i \;\text{factors through}\; \Sigma^i(g)\;\text{in}\; \mathcal {T}\;\text{for}\; i\in\Phi \}.
\end{array}
$$
This construction originally comes from Chen's construction in an abelian category in \cite{Chen2014} and Pan's construcion in  an additive category in \cite{Pan}.

\begin{lem}\cite[Lemma\; 4.2]{Xi}\label{lem3}
Suppose that $\Lambda$ is a subring of $\Gamma$ with the same identity.
\begin{itemize}

\item[{\rm (1)}] The restriction functor $\mathcal {F}: \Gamma\modc\ra \Lambda\modc$ is an exact faithful functor, and has a right adjoint
$\mathcal {G}=\Hom(_\Lambda\Gamma_\Gamma,-): \Lambda\modc\ra \Gamma\modc$ and a left adjoint $\mathcal {H}=\Gamma\otimes_\Lambda-: \Lambda\modc\ra \Gamma\modc$. In particular, $\mathcal {H}$ preserves projective modules and
$\mathcal {G}$ preserves injective modules.
\item[{\rm (2)}] The functor $\mathcal {H}=\Gamma\otimes_\Lambda-: \proj{\Lambda}\ra \proj{\Gamma}$ which sends $\Lambda$ to $\Gamma$ is faithful.
\end{itemize}
\end{lem}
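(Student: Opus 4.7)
The plan is to establish (1) from the standard induction-restriction adjunction and then derive (2) from injectivity of the unit of that adjunction on finitely generated projective modules.

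For part (1), I would first record the canonical bimodule structures on $\Gamma$ coming from the unital inclusion $\Lambda\hookrightarrow\Gamma$: $\Gamma$ is a $\Gamma$-$\Lambda$-bimodule and also a $\Lambda$-$\Gamma$-bimodule. The standard tensor-hom adjunction then supplies $\mathcal{H}=\Gamma\otimes_\Lambda-$ as a left adjoint and $\mathcal{G}=\Hom_\Lambda({}_\Lambda\Gamma_\Gamma,-)$ as a right adjoint of $\mathcal{F}$: on the left one uses $\Hom_\Gamma(\Gamma\otimes_\Lambda N,M)\cong \Hom_\Lambda(N,\mathcal{F}(M))$, and on the right the isomorphism $\Hom_\Lambda(\mathcal{F}(M),N)\cong \Hom_\Gamma(M,\Hom_\Lambda(\Gamma,N))$. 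Exactness of $\mathcal{F}$ is clear because kernels and cokernels in $\Gamma\modc$ and $\Lambda\modc$ coincide as abelian groups, and faithfulness follows because a $\Gamma$-linear map that vanishes as a map of abelian groups is already zero. The assertions that $\mathcal{H}$ preserves projectives and $\mathcal{G}$ preserves injectives are instances of a general principle: a left (respectively right) adjoint of an exact functor sends projective (respectively injective) objects to projective (respectively injective) ones.

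For part (2), let $\phi\colon P\to P'$ be a morphism in $\proj{\Lambda}$ with $\mathcal{H}(\phi)=0$. The naturality square for the unit $\eta$ of $\mathcal{H}\dashv\mathcal{F}$ gives $\eta_{P'}\circ\phi=\mathcal{F}\mathcal{H}(\phi)\circ\eta_P=0$, so it suffices to show that $\eta_{P'}\colon P'\to \Gamma\otimes_\Lambda P'$, $p'\mapsto 1\otimes p'$, is injective for every finitely generated projective left $\Lambda$-module $P'$. For $P'=\Lambda$ this map is the ring inclusion $\Lambda\hookrightarrow\Gamma$, which is injective by hypothesis; for $P'=\Lambda^n$ injectivity follows coordinate-wise; and for a direct summand of $\Lambda^n$ it is inherited by restriction. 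Thus $\phi=0$, and $\mathcal{H}$ is faithful on $\proj{\Lambda}$.

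I do not expect any real obstacle here: both parts are standard consequences of the restriction-induction formalism. The only point requiring care is bookkeeping the left/right bimodule structures on $\Gamma$ when assembling the tensor-hom isomorphisms, since the paper writes $\mathcal{G}$ using the bimodule ${}_\Lambda\Gamma_\Gamma$ while $\mathcal{H}$ implicitly uses ${}_\Gamma\Gamma_\Lambda$. Once that is sorted, everything reduces to standard adjoint-functor yoga together with the injectivity of the subring inclusion $\Lambda\hookrightarrow\Gamma$.
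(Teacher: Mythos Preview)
Your argument is correct and entirely standard: both parts follow from the restriction--induction adjunction together with the injectivity of the unit on projectives, exactly as you outline. Note, however, that the paper does not supply its own proof of this lemma; it is quoted verbatim from \cite[Lemma~4.2]{Xi} and used as a black box, so there is no proof in the paper to compare your approach against.
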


Recall that an additive category $\mathcal{C}$ is idempotent complete (or Karoubi envelope) if for every
idempotent $p: C\ra C$, that is, $p^2 = p$, there is a decomposition $C\simeq K\oplus K'$ such that $p\simeq \begin{pmatrix}
0&0 \\
0&1
\end{pmatrix}$.
Note that the additive category $\mathcal{C}$ is idempotent complete if and only if every idempotent has a kernel.
For more details of Karoubi's construction on the idempotent complete of an additive category, we refer to \cite{Ka}.
Let $\mathcal{C}$ be an additive category. The idempotent completion
of $\mathcal{C}$ is denoted by $\widehat{\mathcal{C}}$ and is defined as follows. The objects of $\widehat{\mathcal{C}}$ are the pairs $(C, p)$, where $C$ is an object of $\mathcal{C}$ and $p:C\ra C$ is an idempotent morphism. A morphism in $\widehat{\mathcal{C}}$ from $(C, p)$
to $(D, q)$ is a morphism $f:C\ra D \in\mathcal{C}$ such that $fp=qf=f$. For any object $(C, p)$ in
$\widehat{\mathcal{C}}$, the identity morphism $1_{(C, p)} = p$.
There is a fully faithful additive functor $i_{\mathcal {C}}:\mathcal {C}\ra \widehat{\mathcal{C}}$ defined as follows. For
an object $C$ in $\mathcal{C}$, we have that $i_{\mathcal {C}}(C)=(C, 1_C)$ and for a morphism $f$ in $\mathcal {C}$, we have that
$i_{\mathcal {C}}(f)=f$. Every additive category $\mathcal{C}$ can be fully faithfully embedded into an idempotent complete additive category $\widehat{\mathcal{C}}$.

\begin{lem}\cite{Pan}
Let $\al C$ be an additive category.
Then the additive functor $\Hom_{\widehat{\al C}}((W,1_W),-): \widehat{\add W}\lra \Gamma\proj$ is an equivalence of additive categories, where $\widehat{\add W}$ is the idempotent complete of $\add W$.
\end{lem}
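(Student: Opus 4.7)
The plan is to show that the functor $F := \Hom_{\widehat{\al C}}((W,1_W), -) : \widehat{\add W} \lra \proj{\Gamma}$, with $\Gamma = \End_{\widehat{\al C}}((W,1_W)) = \End_{\al C}(W)$ acting on $F(-)$ by composition, is well defined, fully faithful, and essentially surjective. The argument combines a Yoneda-style calculation on $\add W$ with the universal property of the idempotent completion, so the overall shape is standard; the work lies in adapting it to the conventions used in the paper.

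First, I would verify that $F$ lands in $\proj{\Gamma}$. For any $A \in \add W$ that is a direct summand of $W^n$, the module $F((A,1_A)) = \Hom_{\al C}(W,A)$ is a direct summand of $\Hom_{\al C}(W,W^n) \cong \Gamma^n$, and hence is a finitely generated projective left $\Gamma$-module. For a general $(A,p) \in \widehat{\add W}$, the splitting of $(A,1_A)$ determined by the idempotent $p$ is preserved by $F$, so $F((A,p))$ is a direct summand of $F((A,1_A))$, and therefore projective.

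Next, full faithfulness is a Yoneda computation. For $A = W$ and any $(B,q) \in \widehat{\add W}$, the standard identification $\Hom_\Gamma(\Gamma, N) \cong N$ applied to $N = F((B,q))$ gives a natural isomorphism $\Hom_\Gamma\bigl(F((W,1_W)), F((B,q))\bigr) \cong F((B,q)) = \Hom_{\widehat{\al C}}((W,1_W),(B,q))$. Passing to direct summands of $W^n$ extends this to arbitrary $A \in \add W$, and then writing $(A,p)$ as a direct summand of $(A,1_A)$ via $p$ shows that $F$ is fully faithful on all of $\widehat{\add W}$.

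For essential surjectivity, let $P \in \proj{\Gamma}$ be finitely generated. Then $P$ is a direct summand of $\Gamma^n \cong F((W^n, 1_{W^n}))$ for some $n$, cut out by an idempotent $\pi \in \End_\Gamma(\Gamma^n)$. By full faithfulness on $\add W$, this $\pi$ is of the form $F(e)$ for a unique idempotent $e \in \End_{\al C}(W^n)$, and then $(W^n, e) \in \widehat{\add W}$ satisfies $F((W^n, e)) \cong P$. The main obstacle I anticipate is bookkeeping: keeping the paper's composition convention and the induced left versus right module structures consistent throughout, so that idempotents on the categorical and module sides transport correctly under $F$.
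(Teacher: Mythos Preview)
Your argument is correct and is the standard projectivization/Yoneda argument: check that the functor lands in finitely generated projectives, use the Yoneda lemma on $\add W$ and pass to direct summands for full faithfulness, then lift idempotents for essential surjectivity. There is nothing to compare against here, because the paper does not prove this lemma; it is quoted from \cite{Pan} without proof. Your caution about the composition convention is well placed: the paper writes compositions in diagrammatic order, so $\Hom_{\al C}(W,-)$ becomes a \emph{left} $\Gamma$-module via precomposition, and with that in mind all of your identifications go through as written.
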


Let $\mathcal{G}$ be the inverse functor of $\Hom_{\widehat{\al C}}((W,1_W),-)$, and denote by $\mathcal{F}$ the composition of the functors  $-_\Lambda\otimes\Gamma$ and $\mathcal{G}$.
We then have the following commutative diagram:

$$\xymatrix{
\proj{\Lambda} \ar[rrr]^{\Gamma\otimes_\Lambda-}\ar[drrr]_{\mathcal{F}}&&& \proj{\Gamma}\ar@<1ex>[d]^{\mathcal{G}}\\
&&&\widehat{\add W}\ar@<1ex>[u]^{\Hom_{\mathcal {C}}(W,-)}
.}$$
By Lemma \ref{lem3}, we know that $\mathcal{F}$ is a faithful functor. Denote the image of $F$ by $\mathcal {S}$. Then $\mathcal {S}$
is a subcategory of $\widehat{\add V}$, but it is not necessarily a full subcategory of $\widehat{\add V}$.

As in \cite{Pan}, we have the following isomorphisms:
$$\aligned
\widetilde{\E^{\Phi}(X,M)}\simeq\Hom_{\Lambda}(\Lambda e_{11},\Lambda e_{22})
\simeq\widetilde{\Hom_{\Gamma}(\Gamma e_{11},\Gamma e_{22})}
\simeq\Hom_{\mathcal {S}}(\mathcal{G}(\Gamma e_{11}),\mathcal{G}(\Gamma e_{22}))\\
\simeq\Hom_{\mathcal {S}}(X,M),
\endaligned$$
where $e_{11}, e_{22}$ are idempotents of $\Lambda$, and $\widetilde{\Hom_{\Gamma}(\Gamma e_{11},\Gamma e_{22})}$ is a subring of $\Hom_{\Gamma}(\Gamma e_{11},\Gamma e_{22})$.
Consequently,
$$\aligned
\widetilde{\E^{\Phi}(X,M)}\simeq\Hom_{\mathcal {S}}(X,M),
\widetilde{\E^{\Phi}(M,Y)}\simeq\Hom_{\mathcal {S}}(M,Y),
\widetilde{\E^{\Phi}(X)}\simeq\End_{\mathcal {S}}(X),\\
\E^{\Phi}(M)\simeq\End_{\mathcal {S}}(M), \E^{\Phi}(M,X)\simeq\Hom_{\mathcal {S}}(M,X), \E^{\Phi}(Y,M)\simeq\Hom_{\mathcal {S}}(Y,M),\\
\widetilde{\E^{\Phi}(M,Y)}\simeq\Hom_{\mathcal {S}}(M,Y), \E^{\Phi}(Y,X)\simeq\Hom_{\mathcal {S}}(Y,X), \widetilde{\E^{\Phi}(Y)}\simeq\End_{\mathcal {S}}(Y),\\
\endaligned$$
and
$$\aligned
\widetilde{\E^{\Phi}(X,M_i)}\simeq\Hom_{\mathcal {S}}(X,M_i),
\widetilde{\E^{\Phi}(M_i,Y)}\simeq\Hom_{\mathcal {S}}(M_i,Y),
\E^{\Phi}(M_i,X)\simeq\End_{\mathcal {S}}(M_i,X),\\
\E^{\Phi}(Y,M_i)\simeq\Hom_{\mathcal {S}}(Y,M_i),
\E^{\Phi}(M_i,M_j)\simeq\Hom_{\mathcal {S}}(M_i,M_j),
\endaligned$$
for $1\leq i,j\leq n$.

Thus transferring the subalgebras of  $\Phi$-Auslander-Yoneda algebras to endomorphism algebras of some sequence, one may ask whether this sequence
$X\xra f M_1\xra{f_1} M_2\ra\cdots\xra{f_{n-3}} M_{n-2}\xra g Y$   is an $\add M$-split sequence or not in $\mathcal {S}$.
The following is a key lemma in our proof which tells us that $f$ and $g$ are left and right $\add M$-approximations of $X$ and $Y$ in $\mathcal {S}$, respectively.

\begin{lem}\label{3.4} The morphisms $f$ and $g$ are left and right $\add M$-approximations of $X$ and $Y$ in $\mathcal {S}$, respectively. Moreover, there are two exact sequences induced by the definitions of subrings:
$$
0\lra \Hom_{\mathcal {S}}(M, X)\xra {\Hom_{\mathcal {S}}(M, f)} \Hom_{\mathcal {S}}(M,  M_1)\ra\cdots\ra \Hom_{\mathcal {S}}(M, M_{n-2})\xra {\Hom_{\mathcal {S}}(M, g)} \Hom_{\mathcal {S}}(M, Y)\lra 0,
\quad (\star),$$
and
$$0\lra \Hom_{\mathcal {S}}(Y, M)\xra {\Hom_{\mathcal {S}}(g, M)} \Hom_{\mathcal {S}}(M_{n-2}, M)\ra\cdots\ra \Hom_{\mathcal {S}}(M_1,M)\xra {\Hom_{\mathcal {S}}(f, M)} \Hom_{\mathcal {S}}(X, M)\lra 0\quad (\star\star).
$$
\end{lem}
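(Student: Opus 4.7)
The plan is to handle the two claims separately. The approximation statements follow essentially from the definitions of the tilde subalgebras, while the two exact sequences follow from applying cohomological functors to the $n$-angle and truncating via the vanishing hypotheses.

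For the approximation: given any $N\in\add M$ and any $t=(t_i)_{i\in\Phi}\in \Hom_{\mathcal{S}}(X,N)\cong \widetilde{\E^{\Phi}(X,N)}$, each component $t_i: X\to \Sigma^i N$ factors through $f$ in $\mathcal{T}$ by the very definition of the tilde subalgebra, say $t_i=f\cdot s_i$ with $s_i:M_1\to \Sigma^i N$. The tuple $s=(s_i)_{i\in\Phi}$ assembles into an element of $\E^{\Phi}(M_1,N)\cong \Hom_{\mathcal{S}}(M_1,N)$ (which carries no tilde-restriction, so every tuple is allowed), and unwinding the composition rule in the $\Phi$-orbit category gives $t=f\cdot s$. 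Hence $f$ is a left $\add M$-approximation of $X$ in $\mathcal{S}$. The symmetric argument, using the factorization through $\Sigma^i g$ built into $\widetilde{\E^{\Phi}(N,Y)}$, shows $g$ is a right $\add M$-approximation of $Y$ in $\mathcal{S}$.

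For the exact sequences, both $(\star)$ and $(\star\star)$ decompose as direct sums over $\Phi$, so I check each degree $i$ separately. For $(\star)$, apply $\Hom_{\mathcal{T}}(M,-)$ to the $i$-th rotated $n$-angle
\[
\Sigma^i X\to \Sigma^i M_1\to\cdots\to \Sigma^i M_{n-2}\to \Sigma^i Y\to \Sigma^{i+1}X,
\]
producing a long exact sequence in a weakly $n$-angulated category. For $i=0$, the hypothesis $\Hom_{\mathcal{T}}(\Sigma M,Y)=0$ kills the term preceding $\Hom_{\mathcal{T}}(M,X)$, yielding injectivity, while the image at $\Hom_{\mathcal{T}}(M,Y)$ is exactly the subspace of maps factoring through $g$, which is $\widetilde{\E^{\Phi}(M,Y)}_0$ by definition. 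For $i\in\Phi\setminus\{0\}$, the hypothesis $\Hom_{\mathcal{T}}(M,\Sigma^i X)=0$ kills the left end outright (matching the vanishing of the $i$-th component of $\Hom_{\mathcal{S}}(M,X)$), and the image at $\Hom_{\mathcal{T}}(M,\Sigma^i Y)$ is $\widetilde{\E^{\Phi}(M,Y)}_i$ by definition. Summing over $i\in\Phi$ yields $(\star)$. The argument for $(\star\star)$ is dual: apply $\Hom_{\mathcal{T}}(-,\Sigma^i M)$ to the same $n$-angle, using $\Hom_{\mathcal{T}}(\Sigma X,M)=0$ in degree zero and $\Hom_{\mathcal{T}}(Y,\Sigma^i M)=0$ for nonzero $i\in\Phi$ to truncate.

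The main obstacle is bookkeeping: correctly pairing each vanishing hypothesis with the truncation it produces, and recognizing that the image of the penultimate map in each long exact sequence agrees \emph{exactly} with the tilde-subspace appearing on the right-hand end of $(\star)$ or $(\star\star)$. Once these identifications are set up, the interior exactness is immediate from the standard long exact sequence obtained by applying $\Hom$ to an $n$-angle in a weakly $n$-angulated category, with no further input required.
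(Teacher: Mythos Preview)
Your proposal is correct and follows the same approach as the paper's proof, only spelled out in considerably more detail. The paper's argument is very terse: it notes that the identifications $\widetilde{\E^{\Phi}(X,M)}\simeq\Hom_{\mathcal{S}}(X,M)$ and $\widetilde{\E^{\Phi}(M,Y)}\simeq\Hom_{\mathcal{S}}(M,Y)$ immediately make $f$ a left and $g$ a right $\add M$-approximation in $\mathcal{S}$, and then simply cites the four vanishing hypotheses to assert the two exact sequences. Your explicit degree-by-degree analysis, pairing each vanishing assumption with the truncation it effects in the long exact sequence obtained from the $n$-angle, is precisely the computation the paper suppresses.
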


\begin{proof}
Since $\widetilde{\E^{\Phi}(X,M)}\simeq\Hom_{\mathcal {S}}(X,M)$ and $\widetilde{\E^{\Phi}(M,Y)}\simeq\Hom_{\mathcal {S}}(M,Y)$,
we have the following exact sequences,
$\Hom_{\mathcal {S}}(M_1,M)\xra{\Hom_{\mathcal {S}}(f,M)} \Hom_{\mathcal {S}}(X,M)\ra 0,
\;\text{and}\;
\Hom_{\mathcal {S}}(M,M_n)\xra{\Hom_{\mathcal {S}}(M,g)} \Hom_{\mathcal {S}}(M,Y)\ra 0.$
Hence, the morphism $X\xra f M_1$ is a left $\add M$-approximation in $\mathcal {S}$, and the morphism $M_n\xra g Y$ is a right $\add M$-approximation in $\mathcal {S}$.
Since
$\Hom_{\al T}(M,\Sigma^i(X))=0=\Hom_{\al T}(Y,\Sigma^i(M))$ for $0\neq i\in\Phi$,
and $\Hom_{\al T}(\Sigma(X),Y)=\Hom_{\al T}(\Sigma(X),M)=\Hom_{\al T}(\Sigma(M),Y)=0$, there are two exact sequences
$$
0\lra \Hom_{\mathcal {S}}(M, X)\xra {\Hom_{\mathcal {S}}(M, f)} \Hom_{\mathcal {S}}(M,  M_1)\ra\cdots\ra \Hom_{\mathcal {S}}(M, M_{n-2})\xra {\Hom_{\mathcal {S}}(M, g)} \Hom_{\mathcal {S}}(M, Y)\lra 0,
$$
and
$$0\lra \Hom_{\mathcal {S}}(Y, M)\xra {\Hom_{\mathcal {S}}(g, M)} \Hom_{\mathcal {S}}(M_{n-2}, M)\ra\cdots\ra \Hom_{\mathcal {S}}(M_1,M)\xra {\Hom_{\mathcal {S}}(f, M)} \Hom_{\mathcal {S}}(X, M)\lra 0.
$$
\end{proof}

Set the complex $\cpx{P}:  0\lra X\xra {f}  M_1 \xra {f_1}\cdots\lra  M_{n-1}
\xra {\overline{f_{n-3}}} M_{n-2}\oplus M\lra 0$, where $\overline{f_{n-3}}=(0,f_{n-3}):M_{n-1}\ra M_{n-2}\oplus M$.
Since $H^i(\Hom_{\al S}(M,\cpx{P}))=0$ for all $i\neq n$ and $H^i(\Hom_{\al S}(\cpx{P},M))=0$ for all $i\neq -n$,
it follows from \cite[Lemma 2.1]{Hoshino2003} that the complex $\cpx{P}$
is self-orthonal in $\Kb{\mathcal{S}}$.

Let $U=M\oplus X$ and $V=M\oplus Y$ . Then we have
$$\End_{\mathcal{S}}(U)\simeq\begin{pmatrix}
\End_{\mathcal {S}}(M)&       \Hom_{\mathcal {S}}(M, X)\\
\Hom_{\mathcal {S}}(X,M)     & \End_{\mathcal {S}}(X)
\end{pmatrix}
=\begin{pmatrix}
\E^{\Phi}(M)&       \E^{\Phi}(M, X)\\
\widetilde{\E^{\Phi}(X,M})     & \widetilde{\E^{\Phi}(X)}
\end{pmatrix}=:A.
$$
and
$$\End_{\mathcal{S}}(V)\simeq\begin{pmatrix}
\End_{\mathcal {S}}(M)&       \Hom_{\mathcal {S}}(M, Y)\\
\Hom_{\mathcal {S}}(Y,M)     & \End_{\mathcal {S}}(Y)
\end{pmatrix}
=\begin{pmatrix}
\widetilde{\E_{\al T}^\Phi(Y)}&     \E_{\al T}^\Phi(Y,M)\\
\widetilde{\E_{\al T}^\Phi(M,Y)}    &\E_{\al T}^\Phi(M)
\end{pmatrix}=:B.
$$
Therefore, we have a complex over $\End_{\mathcal{S}}(U)$ of the form
$$
\cpx{T}: 0\lra \Hom_{\mathcal {S}}(U, X)\xra {\Hom_{\mathcal {S}}(U, f)} \Hom_{\mathcal {S}}(U, M_1)\ra\cdots\ra \Hom_{\mathcal {S}}(U, M_n\oplus M)\lra 0.
$$
Then we have to show that $\cpx{T}$ is a tilting complex over $\End_{\mathcal{S}}(U)$.
The complex $\cpx{T}$ is self-orthogonal since $\Hom_{\mathcal{S}}(U,-):\add U\ra \proj{\End_{\mathcal{S}}(U)}$ is fully faithful.
It is easy to see that $\add(\cpx{T})$ generates $\Kb{\proj{\End_{\mathcal{S}}(U)}}$ as a triangulated category.
It suffices to show that  $\End_{\Kb{\proj{\End_{\mathcal{S}}(U)}}}(\cpx{T})\simeq B$ as rings.

\begin{lem}\label{2.5} The two rings $\End_{\Kb{\proj{\End_{\mathcal{S}}(U)}}}(\cpx{T})$ and $B$ are isomorphic.
\end{lem}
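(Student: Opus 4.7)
The plan is to transfer the computation from $\Kb{\proj A}$ to $\Kb{\al S}$ and then identify the latter with $B$ using the $n$-angulated structure. Under the equivalence $\Hom_{\al S}(U,-): \add U\ra\proj A$, extended to a triangle equivalence $\Kb{\add U}\ra\Kb{\proj A}$, the tilting complex $\cpx{T}$ corresponds to the complex $\cpx{P}: 0\ra X\xra{f} M_1\ra\cdots\ra M_{n-3}\xra{\ol{f_{n-3}}} M\oplus M_{n-2}\ra 0$ in $\Kb{\al S}$. Hence it suffices to exhibit a ring isomorphism $\End_{\Kb{\al S}}(\cpx{P})\simeq B$.

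The core of the proof is a ring homomorphism $\phi: B\simeq \End_{\al S}(V)\ra \End_{\Kb{\al S}}(\cpx{P})$. Given $\gamma\in\End_{\al S}(V)$, I would invoke the morphism axiom for weakly $n$-angulated categories on the $n$-angle $(2)$: choose $\gamma^0: X\ra X$ with $\Sigma(\gamma^0)h=h\gamma$, and extend to a morphism $(\gamma^0,\gamma^1,\ldots,\gamma^{n-2},\gamma,\Sigma(\gamma^0))$ of the $n$-angle to itself; the first $n-1$ components then form a chain endomorphism of $\cpx{P}$. The tilde conditions built into $B$ (e.g., $\gamma|_Y\in\widetilde{\E^\Phi(Y)}$ forces $g\gamma|_Y$ to factor through $\Sigma^i(g)$ for each $i\in\Phi$) are precisely what guarantees that the lift can be chosen with every component lying in the prescribed Hom spaces of $\al S$.

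For surjectivity, given a chain map $\cpx{\beta}: \cpx{P}\ra\cpx{P}$, I would use the chain relation at the top degree, together with the right-approximation property of $\ol g$ provided by the exact sequence $(\star)$ of Lemma~\ref{3.4}, to push $\beta^{n-2}$ forward through $\ol g$ and produce an endomorphism $\gamma$ of $V$. The vanishing hypotheses $\Hom_{\al T}(\Sigma X,V)=\Hom_{\al T}(\Sigma X,M)=\Hom_{\al T}(\Sigma M,Y)=0$ render $\gamma$ well defined up to a null-homotopic correction. For injectivity, if $\phi(\gamma)$ is null-homotopic, a diagram chase using $(\star)$ and $(\star\star)$ upgrades the pointwise vanishing on the chain level to genuine factorizations through $\ol{f_{n-3}}$ and $\ol g$, which combined with the same vanishing hypotheses forces $\gamma=0$.

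The main technical obstacle I anticipate is the matrix analysis of the top component $\beta^{n-2}: M\oplus M_{n-2}\ra M\oplus M_{n-2}$: the chain relation $\beta^{n-2}\ol{f_{n-3}}=\ol{f_{n-3}}\beta^{n-3}$ forces the $(M,M_{n-2})$ off-diagonal entry to annihilate $f_{n-3}$, and this must be matched against the tilde constraints defining $\widetilde{\E^\Phi(M,Y)}$ and $\widetilde{\E^\Phi(Y)}$ degree by degree across the admissible set $\Phi$. Once this bookkeeping, together with the multiplicativity of the assignment $\cpx{\beta}\mapsto\gamma$, is verified, the matrix form of $B$ falls out, along the lines of the argument in \cite[Theorem 1.1]{Pan}.
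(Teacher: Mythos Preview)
Your proposal is correct and follows essentially the same approach as the paper: reduce to $\End_{\Kb{\al S}}(\cpx{P})\simeq\End_{\al S}(V)$ via the full faithfulness of $\Hom_{\al S}(U,-)$, then use the $n$-angle $(2)$ together with the matrix analysis of the top component and the vanishing hypotheses to build and verify the isomorphism. The only cosmetic difference is direction: the paper defines $\Theta:\End_{\Kb{\al S}}(\cpx{P})\to\End_{\al S}(M\oplus Y)$ by extending a chain map to a morphism of $n$-angles and reading off the induced $\beta$ on $M\oplus Y$, whereas you define the inverse $\phi:\End_{\al S}(V)\to\End_{\Kb{\al S}}(\cpx{P})$ by lifting $\gamma$ backwards along the $n$-angle; consequently your well-definedness, injectivity, and surjectivity checks correspond respectively to the paper's injectivity, well-definedness, and construction/surjectivity of $\Theta$, with the same use of the tilde conditions and of $\Hom_{\al T}(\Sigma X,Y)=\Hom_{\al T}(\Sigma X,M)=\Hom_{\al T}(Y,\Sigma^i M)=0$.
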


\begin{proof}

Let $\overline{f_{n-3}}=(0,f_{n-3}):M_{n-3}\ra M_{n-2}\oplus M$ and $\overline{g}=\begin{pmatrix}
1 &0 \\
0 &g
\end{pmatrix}: M_{n-2}\oplus M\ra Y\oplus M$. Then it follows that $\overline{f_{n-3}}\overline{g}=0$ from $f_{n-3}g=0$.
 Since $\Hom_{\al S}(U,-):\add(U)\ra \proj{\End_{\mathcal{S}}(U)}$
is fully faithful, we have the following isomorphim of rings
$$
\End_{\Kb{\proj{A}}}(\cpx{T})\simeq \End_{\Kb{\al S}}(\cpx{P}),
$$
To show the claim, it suffices to prove that there is a ring isomorphism
$$
\Theta: \End_{\Kb{\al S}}(\cpx{P})\lra \End_{\al S}(M\oplus Y).
$$

Now let $(\alpha,\alpha_1,\cdots, \alpha_n):\cpx{P}\lra\cpx{P}$ be a chain map between $\cpx{P}$ with $\alpha\in\End_{\al S}(X), \alpha^i\in\End_{\al S}(M_i)$ for $i=1,\cdots, n-3$ and $\alpha^n\in\End_{\al S}(M\oplus M_{n-2})$. Since $\alpha\in\End_{\al S}(X)\simeq\E^{\Phi}(X)$ and $\Hom_{\mathcal {S}}(M_i,M_j)\simeq\E^{\Phi}(M_i,M_j)$, and the commutativity between the two $n$-angles, there exists a morphism $\beta_i\colon M\oplus Y\ra \Sigma^i(M\oplus Y)$ such that the following diagram is commutative:

$$\xymatrix@M=0.1mm{
X\ar[r]^{f}\ar[d]^{\alpha_i} &M_1\ar[r]^{f_1}\ar[d]^{\alpha^1_i} & M_2\ar[r]^{f_2}\ar[d]^{\alpha^2_i} &\cdots\ar[r]^{f_{n-2}}&M_{n-3}\ar[d]^{\alpha^{n-3}_i}\ar[r]^{\overline{f_{n-3}}} &M\oplus M_{n-2}\ar[d]^{\alpha^{n-2}_i}\ar[r]^{\overline{g}} &M\oplus Y\ar@{..>}[d]^{\beta_i}\ar[r]^{\overline{h}} &\Sigma(X)\ar[d]^{\Sigma(\alpha_i)}\\
\Sigma^i(X)\ar[r]_{\Sigma^i(f)} &\Sigma^i(M_1)\ar[r]_{\Sigma^i(f_1)} &\Sigma^i(M_2)\ar[r]_{\Sigma^i(f_2)} &\cdots\ar[r]_{\Sigma^i(f_{n-2})}& \Sigma^i(M_{n-3})\ar[r]_{\Sigma^i(f_{n-3})} \ar[r]_{\Sigma^i(\overline{f_{n-3}})} &\Sigma^i(M\oplus M_{n-2})\ar[r]_{\Sigma^i(\overline{g})} &\Sigma^i(M\oplus Y)\ar[r]_{\Sigma^i(\overline{h})} &\Sigma^{i+1}(X)
.}$$
Therefore, there are two things to check, one is that the morphism $\beta=(\beta_i)_{i\in\Phi}$ is in $\End_{\mathcal{S}}(M\oplus Y)$, the other is that the morphism $\beta$ is unique.
To show these things, we need the following facts:
$$\End_{\mathcal{S}}(M\oplus Y)\simeq\begin{pmatrix}
\End_{\mathcal {S}}(M)&       \Hom_{\mathcal {S}}(M, Y)\\
\Hom_{\mathcal {S}}(Y,M)     & \End_{\mathcal {S}}(Y)
\end{pmatrix}
\simeq\begin{pmatrix}
\E^{\Phi}(M)&      \widetilde{ \E^{\Phi}(M, Y)}\\
\E^{\Phi}(Y,M)     & \widetilde{\E^{\Phi}(Y)}
\end{pmatrix}
$$
$$
\aligned
\End_{\mathcal{S}}(M\oplus M_{n-2})\simeq\begin{pmatrix}
\End_{\mathcal {S}}(M)&       \Hom_{\mathcal {S}}(M, M_{n-2})\\
\Hom_{\mathcal {S}}(M_{n-2},M)     & \End_{\mathcal {S}}(M_{n-2})
\end{pmatrix}
\simeq\begin{pmatrix}
\E^{\Phi}(M)&       \E^{\Phi}(M, M_{n-2})\\
\E^{\Phi}(M_{n-2},M)     & \E^{\Phi}(M_{n-2})
\end{pmatrix}\\=\E^{\Phi}(M\oplus M_{n-2}).
\endaligned
$$

To check $\beta\in\End_{\al S}(Y\oplus M)$, we set
$
\alpha^{n-2}_i=\begin{pmatrix}
x_i^1&x_i^2 \\
x_i^3 &x_i^4
\end{pmatrix}\in
\begin{pmatrix}
\E^{\Phi}(M)&       \E^{\Phi}(M, M_{n-2})\\
\E^{\Phi}(M_{n-2},M)     & \E^{\Phi}(M_{n-2})
\end{pmatrix},
$
and
$
\beta_i=\begin{pmatrix}
\beta_i^1&\beta_i^2 \\
\beta_i^3 &\beta_i^4
\end{pmatrix}
$, where $\beta_i^1:M\ra\Sigma^i(M),\beta_i^2:M\ra\Sigma^i(Y),
\beta_i^3:Y\ra\Sigma^i(M),\beta_i^4:Y\ra\Sigma^i(Y)$.
It follows from $\overline{g}\beta_i=\alpha_i^{n-2}\Sigma^i(\overline{g})$ that
$
\begin{pmatrix}
1&0 \\
0 &g
\end{pmatrix}
\begin{pmatrix}
\beta_i^1&\beta_i^2 \\
\beta_i^3 &\beta_i^4
\end{pmatrix}=\begin{pmatrix}
x_i^1&x_i^2 \\
x_i^3 &x_i^4
\end{pmatrix}
\begin{pmatrix}
1&0 \\
0 &\Sigma^i(g)
\end{pmatrix}.
$
Consequently,  we get $\beta_i^1=x_i^1, \beta_i^2=x_i^2\Sigma^i(g), g\beta_i^3=x_i^3, g\beta_i^4=x_i^4\Sigma^i(g)$. It follows that $\beta\in\begin{pmatrix}
\E_{\al T}^\Phi(M)&   \widetilde{ \E_{\al T}^\Phi(M,Y)} \\
  \E_{\al T}^\Phi(Y,M) &\widetilde{\E_{\al T}^\Phi(Y)}
\end{pmatrix}\simeq
\End_{\al S}(M\oplus Y)$. Then we define $\Theta(\alpha,\alpha_1,\cdots, \alpha_n)=\beta$.

To show that $\Theta$ is well-defined, it suffices to prove that the chain map $(\alpha,\alpha^1,\cdots, \alpha^{n-2})$ is homotopic to the zero if and only if $\beta=0$.
If $(\alpha,\alpha^1,\cdots, \alpha^{n-2})$ is null-homotopic, then there exists $h_i^{n-2}: M_n\oplus M\ra\Sigma^i( M_{n-3})$ such that $\alpha^{n-2}_i=h_i^{n-2}\Sigma^i(\overline{f_{n-3}})$. In this case, we have
$$
\alpha^{n-2}_i\Sigma^i(\overline{g})=h_i^{n-2}\Sigma^i(\overline{f_{n-3}})\Sigma^i(\overline{g})=0,
$$
and therefore $\overline{g}\beta_i=0$, hence $\begin{pmatrix}
1&0 \\
0 &g
\end{pmatrix}
\begin{pmatrix}
\beta_i^1&\beta_i^2 \\
\beta_i^3 &\beta_i^4
\end{pmatrix}=\begin{pmatrix}
\beta_i^1&\beta_i^2 \\
g\beta_i^3 &g\beta_i^4
\end{pmatrix}=0$, we thus get $\beta_i^1=\beta_i^2=0$.
In the case $i=0$,  it follows from $\overline{g}\beta_0=0$ that $\beta_0=\ol{h}a$, where $a:\Sigma(X)\to Y\oplus M$.
But $a=0$ since $\Hom_{\al T}(\Sigma(X),Y)=\Hom_{\al T}(\Sigma(X),M)=0$. Then we have $\beta_0=0$. In the case $0\neq i\in\Phi$,
Since $\Hom_{\al T}(Y,\Sigma^i(M))$ for $0\neq i\in\Phi=0$, $\beta_i^3=0$. We only to show that $\beta_i^4=0$.
By $h^1_i=0: M_1\to \Sigma^i(X)$, we get $\alpha_i=0$.
Then $\beta_i\Sigma^i(\ol{h})=0$, and $\beta_i$ factors through  $y_i=\begin{pmatrix}
y_i^1&y_i^2 \\
y_i^3 &y_i^4
\end{pmatrix}:Y\oplus M\to \Sigma^i{(Y\oplus M)}$, that is $\beta_i=\begin{pmatrix}
\beta_i^1&\beta_i^2 \\
\beta_i^3 &\beta_i^4
\end{pmatrix}=\begin{pmatrix}
y_i^1&y_i^2 \\
y_i^3 &y_i^4
\end{pmatrix}\Sigma^i(\ol{g})$, where $y_i^1:Y\to \Sigma^i(M_{n-2}), y_i^2:Y\to \Sigma^i(M)$, and $y_i^1=y_i^2=0$ since $\Hom_{\al T}(Y,\Sigma^i(M))$ for $0\neq i\in\Phi=0$.
Therefore, $\beta_i^4=0$ for $0\neq i\in\Phi$. Then $\beta_i=0$ for $i\in\Phi$ , hence $\beta=0$.

Now suppose that $\beta=0$. Then $\overline{g}\beta_i=\alpha_n\Sigma^i(\overline{g})=0$.  It follows that there exists a morphism $h_i^{n-2}: M\oplus M_{n-2}\ra \Sigma^i(M_{n-3})$ such that $\alpha_i^{n-2}=h_i^{n-2}\Sigma^i(\overline{f_{n-3}})$. Since $(\alpha_i^{n-3}-\overline{f_{n-3}}h_i^{n-2})\Sigma^i(\overline{f_{n-3}})=\alpha_i^{n-3}\Sigma^i(\overline{f_{n-3}})-\overline{f_{n-3}}h_i^{n-2}\Sigma^i(\overline{f_{n-3}})
=\overline{f_{n-3}}\alpha_i^{n-2}-\overline{f_{n-3}}\alpha_i^{n-2}=0$, there exists a morphism $h_i^{n-3}: M_{n-3}\ra \Sigma^i(M_{n-4})$ such that $\alpha_i^{n-3}=h_i^{n-3}\Sigma^i(f_{n-4})+\overline{f_{n-3}}h_i^{n-2}$.
By the induction, we have the following diagram:

$$\xymatrix@M=1mm{
X\ar[r]^{f}\ar[d]^{\alpha_i} &M_1\ar@{..>}[dl]_{h^1_i}\ar[r]^{f_1}\ar[d]^{\alpha_i^1} & M_2\ar@{..>}[dl]_{h^2_i}\ar[r]^{f_2}\ar[d]^{\alpha_i^2} &\cdots\ar[r]^{f_{n-4}}&M_{n-3}\ar@{..>}[dl]_{h^{n-3}_i}\ar[d]^{\alpha_i^{n-3}}\ar[r]^{\overline{f_{n-3}}} &M_{n-2}\oplus M\ar[d]^{\alpha_i^{n-2}}\ar[r]^{\overline{g}}\ar@{..>}[dl]_{h^{n-2}_i} &Y\oplus M\ar@{..>}[d]^{\beta_i}\ar[r]^{\overline{h}} &\Sigma(X)\ar[d]^{\Sigma(\alpha_i)}\\
\Sigma^i(X)\ar[r]_{\Sigma^i(f)} &\Sigma^i(M_1)\ar[r]_{\Sigma^i(f_1)} &\Sigma^i(M_2)\ar[r]_{\Sigma^i(f_2)} &\cdots\ar[r]_{\Sigma^i(d_{n-4})}& \Sigma^i(M_{n-3})\ar[r]_{\Sigma^i(\ol{f_{n-3}})} &\Sigma^i(M_{n-2}\oplus M)\ar[r]_{\Sigma^i(\ol{g})} &\Sigma^i(Y\oplus M)\ar[r]^{\Sigma^i(\overline{h})} &\Sigma^{i+1}(X)\\
}$$
and we get $\alpha_i^j=h_i^j\Sigma^i(d_{j-1})+d_jh_i^{j+1}$ for $j=2,\cdots, n-2$, $\alpha_i^1=h_i^1\Sigma^i(f)+d_1h_i^2$ and
$\alpha=f h_i^1$.
Therefore, $(\alpha,\alpha^1,\cdots, \alpha^{n-2})$ is null-homotopic in $\Kb{\al S}$.

So, by the above argument,  the following map
$$
 \begin{alignedat}{3}
\Theta: \End_{\Kb{\al S}}(\cpx{P})\lra \End_{\al S}(M\oplus Y)\\
\overline{(\alpha,\alpha_1,\cdots, \alpha_n)} \mapsto \beta
 \end{alignedat}
$$
is well-defined.  This map is injective, it remains to show that  $\Img(\Theta)=\End_{\al S}(M\oplus Y)$.
For any
$$
\beta=\begin{pmatrix}
\beta_i^1&\beta_i^2 \\
\beta_i^3 &\beta_i^4
\end{pmatrix}\in \begin{pmatrix}
\E^{\Phi}(M)&       \widetilde{\E^{\Phi}(M, Y)}\\
\E^{\Phi}(Y,M)     & \widetilde{\E^{\Phi}(Y)}
\end{pmatrix},
$$
it follows that there exist $\overline{\beta}_i^2\in\Hom(M,\Sigma^i(M_{n-2}))$, $\overline{\beta}_i^4\in\Hom(M_{n-2},\Sigma^i(M_{n-2}))$ such that $\beta_2=\overline{\beta}_i^2\Sigma^i(g ),g\beta_i^4=\overline{\beta}_i^4 \Sigma^i(g)$.
Consequently,
$$
\begin{pmatrix}
1&0 \\
0 &g
\end{pmatrix}
\begin{pmatrix}
\beta_i^1&\beta_i^2 \\
\beta_i^3 &\beta_i^4
\end{pmatrix}=\begin{pmatrix}
\beta_i^1&\overline{\beta}_i^2g \\
g\beta_i^3 &g\beta_4
\end{pmatrix}=\begin{pmatrix}
\beta_i^1&\overline{\beta}_i^2 \\
g\beta_i^3 &\overline{\beta}_i^4
\end{pmatrix}
\begin{pmatrix}
1&0 \\
0 &\Sigma^i(g)
\end{pmatrix}.
$$
Set $\alpha_i^{n-2}=\begin{pmatrix}
\beta_i^1&\overline{\beta}_i^2 \\
g\beta_i^3 &\overline{\beta}_i^4
\end{pmatrix}$.
Since $\overline{f_{n-3}}\alpha_i^{n-2}\Sigma^i(\overline{g})=0$, by induction, there exist maps
$\alpha_i^{n-3}: M_{n-3}\ra \Sigma^i(M_{n-3})$ such that $\alpha_i^j\Sigma^i(\overline{f_j})=\overline{f_j}\alpha_i^{j+1}$ for $1\leq j\leq n-3$, and there exists a unique map $\alpha\in \End_{\al S}(X)$ such that we have the following diagram
$$\xymatrix@M=0.1mm{
X\ar[r]^{f}\ar@{..>}[d]^{\alpha_i} &M_1\ar[r]^{f_1}\ar@{..>}[d]^{\alpha_i^1} & M_2\ar[r]^{f_2}\ar@{..>}[d]^{\alpha_i^2} &\cdots\ar[r]^{d_{n-4}}&M_{n-3}\ar@{..>}[d]^{\alpha_i^{n-3}}\ar[r]^{\overline{f_{n-3}}} &M_{n-2}\oplus M\ar@{..>}[d]^{\alpha_i^{n-2}}\ar[r]^{\overline{g}} &Y\oplus M\ar[d]^{\beta_i}\ar[r]^{\overline{h}} &\Sigma(X)\ar[d]^{\Sigma(\alpha_i)}\\
\Sigma^i(X)\ar[r]_{\Sigma^i(f)} &\Sigma^i(M_1)\ar[r]_{\Sigma^i(f_1)} &\Sigma^i(M_2)\ar[r]_{\Sigma^i(f_2)} &\cdots\ar[r]_{\Sigma^i(f_{n-4})}& \Sigma^i(M_{n-3})\ar[r]_{\Sigma^i(\ol{f_{n-3}})} &\Sigma^i(M_{n-2}\oplus M)\ar[r]_{\Sigma^i(\overline{g})} &\Sigma^i(Y\oplus M)\ar[r]_{\Sigma^i(\overline{h})} &\Sigma^{i+1}(X).
}$$
This implies that $\overline{(\alpha,\alpha^1,\cdots, \alpha^{n-2})}\in \End_{\Kb{\al S}}(\cpx{P})$ and $\beta=\Theta(\overline{(\alpha,\alpha^1,\cdots, \alpha^{n-2})})$, so the map $\Theta$ is surjective. Note that
$$
\End_{\mathcal {S}}(M\oplus Y)\simeq\begin{pmatrix}
\End_{\mathcal {S}}(M)&       \Hom_{\mathcal {S}}(M,Y)\\
\Hom_{\mathcal {S}}(Y,M)     & \End_{\mathcal {S}}(Y)
\end{pmatrix}=B,
$$
 This completes the proof.
\end{proof}

\begin{rem}
If $n=3$, then we re-obtain the main result  \cite[Theorem 1.1]{BP}.
\end{rem}

Finally, we give an explicit example which satisfies all conditions in Theorem \ref{1.1}.

\noindent{\bf Example.}
It is well-known that $2$-representation finite algebras of type A are the Auslander
algebras of $1$-representation finite algebras. For the definition and the properties of
$n$-representation finite algebras, we refer the reader to \cite{IO}.
Consider a $2$-representation finite algebra $\Lambda$ of type A which is the
Auslander algebra of $A_5$ which is given by quiver

$$
\xymatrix{
&&&&\bullet\\
&&&\bullet&& \bullet\\
&&\bullet&&\bullet&& \bullet\\
&\bullet&&\bullet&&\bullet&&\bullet\\
\bullet&&\bullet&&\bullet&&\bullet&&\bullet\\
\ar^{a_{12}}"5,1";"4,2"
\ar^{a_{23}}"4,2";"3,3"
\ar^{a_{34}}"3,3";"2,4"
\ar^{a_{45}}"2,4";"1,5"
\ar^{a_{24}}"4,2";"5,3"
\ar^{a_{35}}"3,3";"4,4"
\ar^{a_{46}}"2,4";"3,5"
\ar^{a_{57}}"1,5";"2,6"
\ar^{a_{25}}"5,3";"4,4"
\ar^{a_{36}}"4,4";"3,5"
\ar^{a_{47}}"3,5";"2,6"
\ar^{a_{58}}"2,6";"3,7"
\ar^{a_{37}}"4,4";"5,5"
\ar^{a_{48}}"3,5";"4,6"
\ar^{a_{59}}"3,7";"4,8"
\ar^{a_{38}}"5,5";"4,6"
\ar^{a_{49}}"4,6";"3,7"
\ar^{a_{50}}"4,6";"5,7"
\ar^{a_{51}}"5,7";"4,8"
\ar^{a_{60}}"4,8";"5,9"
}
$$
with relations $\{a_{23}a_{35}-a_{24}a_{25},a_{34}a_{46}-a_{35}a_{36},a_{45}a_{57}-a_{46}a_{47}, a_{36}a_{48}-a_{37}a_{38},a_{47}a_{58}-a_{48}a_{49},a_{49}a_{59}-a_{50}a_{51}\}$.

The $2$-cluster tilting subcategory $\mathcal {U}=\add\{\mathcal {S}_2^i( \Lambda)|i\in\mathbb{Z}\}$ is a $4$-angulated category with suspension functor $\Sigma_4$, where $\mathcal {S}=-\otimesL D \Lambda, \mathcal {S}_2=\mathcal {S}\circ[-2]$. And the Auslander-Reiten quiver of $\mathcal {U}$ is given as follows.

$$\widetilde{Q^{(2,5)}}:\quad\quad\begin{array}{l}
\vdots\\
 \xy (0,0)*+{\scriptstyle{400:0}}="a1",
(15,4)*+{\scriptstyle{310:0}}="a2",
(30,8)*+{\scriptstyle{220:0}}="a3",
(45,12)*+{\scriptstyle{130:0}}="a4",
(30,0)*+{\scriptstyle{301:0}}="a5",
(45,4)*+{\scriptstyle{211:0}}="a6",
(60,8)*+{\scriptstyle{121:0}}="a7",
(60,0)*+{\scriptstyle{202:0}}="a8",
(75,4)*+{\scriptstyle{112:0}}="a9",
(90,0)*+{\scriptstyle{103:0}}="a10",
{\ar^1 "a1"; "a2"},
{\ar "a2";"a3"},
{\ar "a3"; "a4"},
{\ar "a5"; "a6"},
 {\ar "a6"; "a7"},
 {\ar "a8"; "a9"},
 {\ar "a9"; "a10"},
 {\ar "a7"; "a9"},
 {\ar "a4"; "a7"},
{\ar "a6"; "a8"},
{\ar "a3"; "a6"},
{\ar "a2"; "a5"},
(15,24)*+{\scriptstyle{400:1}}="b1",
(30,28)*+{\scriptstyle{310:1}}="b2",
(45,32)*+{\scriptstyle{220:1}}="b3",
(60,36)*+{\scriptstyle{130:1}}="b4",
(45,24)*+{\scriptstyle{301:1}}="b5",
(60,28)*+{\scriptstyle{211:1}}="b6",
(75,32)*+{\scriptstyle{121:1}}="b7",
(75,24)*+{\scriptstyle{202:1}}="b8",
(90,28)*+{\scriptstyle{112:1}}="b9",
(105,24)*+{\scriptstyle{103:1}}="b10",
{\ar "b1"; "b2"},
{\ar "b2";"b3"},
{\ar "b3"; "b4"},
{\ar "b5"; "b6"},
{\ar "b6"; "b7"},
{\ar "b8"; "b9"},
{\ar "b9"; "b10"},
{\ar "b7"; "b9"},
{\ar "b4"; "b7"},
{\ar "b6"; "b8"},
{\ar "b3"; "b6"},
{\ar "b2"; "b5"},
{\ar "a5";"b1"},
{\ar "a6"; "b2"},
{\ar "a7"; "b3"},
{\ar "a8"; "b5"},
{\ar "a9"; "b6"},
{\ar "a10"; "b8"},
(30,48)*+{\scriptstyle{300:2}}="c1",
(45,52)*+{\scriptstyle{310:2}}="c2",
(60,56)*+{\scriptstyle{220:2}}="c3",
(75,60)*+{\scriptstyle{130:2}}="c4",
(60,48)*+{\scriptstyle{301:2}}="c5",
(75,52)*+{\scriptstyle{211:2}}="c6",
(90,56)*+{\scriptstyle{121:2}}="c7",
(90,48)*+{\scriptstyle{202:2}}="c8",
(105,52)*+{\scriptstyle{112:2}}="c9",
(120,48)*+{\scriptstyle{103:2}}="c10",
{\ar "c1"; "c2"},
{\ar "c2";"c3"},
{\ar "c3"; "c4"},
{\ar "c5"; "c6"},
{\ar "c6"; "c7"},
{\ar "c8"; "c9"},
{\ar "c9"; "c10"},
{\ar "c7"; "c9"},
{\ar "c4"; "c7"},
{\ar "c6"; "c8"},
{\ar "c3"; "c6"},
{\ar "c2"; "c5"},
{\ar "b5";"c1"},
{\ar "b6"; "c2"},
{\ar "b7"; "c3"},
{\ar "b8"; "c5"},
{\ar "b9"; "c6"},
{\ar "b10"; "c8"},
\endxy\\
\vdots
\end{array}$$

By \cite[Theorem 3.10]{IY}, we have an Auslander-Reiten $4$-angle
$$
211:0 \to 121:0\oplus 202:0 \to 220:1\oplus 301:1\oplus 112:0 \to 211:1\to \Sigma_4 (211:0).
$$
Set $X=211:0, Y=211:1$ and $M=121:0\oplus 202:0\oplus 220:1\oplus 301:1\oplus 112:0$.
Then the rings
$\begin{pmatrix}
\widetilde{\End(X)}&     \widetilde{\Hom(X,M)}  \\
\Hom(M,X)    &\End(M)
\end{pmatrix}$
and
$\begin{pmatrix}
\widetilde{\End(Y)}&     \Hom(Y,M)\\
\widetilde{\Hom(M,Y)}    &\End(M)
\end{pmatrix}$
are derived equivalent.\\

\noindent{\bf Acknowledgements.}
The authors are indebted to the referees for
reading the manuscript carefully and for giving valuable comments and suggestions.

{\footnotesize

\end{document}